\newtheorem{theorem}{Theorem}
\newtheorem{thm}{Theorem}[section]
\newtheorem{lem}[thm]{Lemma}
\numberwithin{equation}{section}
\begin{document}

\title{\bf\Large Diophantine Approximations \hbox{and the Convergence of Certain Series}}

\author{Alexander Begunts, Dmitry Goryashin}

\date{}

\maketitle

\begin{abstract}Consider two series
$$\sum_{n=1}^\infty\frac{\sin^n\pi\theta n}{n^\alpha},\quad\sum_{n=1}^\infty\frac{\cos^n\pi\theta n}{n^\alpha}.$$
We show that number-theoretical properties of $\theta$ have a strong effect on
the convergence when $0<\alpha\leq 1$. The complete investigation
for $\theta\in\mathbb Q$ is given.
For irrational $\theta$ we prove the result which depends on how well $\theta$
can be approximated with rational numbers, i.e. on its irrationality measure.
We obtain that if $\alpha>\frac12$ then both series converge absolutely for almost all real $\theta$.
Finally, we construct such an everywhere dense set of $\theta$ that both series diverge when $\alpha\leq 1$.
\end{abstract}

%

%
%
%

\section*{Introduction}

Various authors considered series which converge or diverge depending on number-theoretical properties of their parameters.
A century ago  G.H.\,Hardy and J.E.\,Little\-wood \cite{hardylittlewood}, {\it inter alia}, investigated the series
$$\sum_{n=1}^\infty\frac{\sin\pi\theta n^2}{n^\alpha},\quad \sum_{n=1}^\infty\frac{\cos\pi\theta n^2}{n^\alpha}.$$
It follows from their results that these series diverge for all irrational $\theta$ when
$\alpha\leq\frac12,$ and converge for all irrational numbers $\theta$ with bounded
partial quotients (for example, quadratic irrationals) when $\alpha>\frac12$.\footnote{In
fact, it follows from estimates of H.\,Weyl sums, obtained later, and celebrated Roth's
theorem, that both series converge for all irrational algebraic $\theta$ when
$\alpha>\frac12$.} They also constructed examples of irrational $\theta$, for which both
series diverge for all $\alpha\leq 1$.

G.\,I.~Arkhipov and K.\,I.~Oskolkov \cite{arkhipov} considered the
series
$$\sum_{n=1}^\infty\frac{\sin\pi\theta n^k}{n}.$$
Their brilliant result states that it converges {\it for all} real $\theta$ if $k$ is odd.
Moreover, they point out such values of $\theta$ for which the series diverges for any even $k$.

In 2011, E.\,Laeng and V.\,Pata \cite{lang} presented a convergence-divergence test for series of
nonnegative terms which fail to be monotonic. As an application of the test the authors
prove the convergence of one series involving the linear function of $\cos n$ raised to the $n$-th power,
using the fact that the irrationality measure of $\pi$ is finite.

In this paper we combine methods of analysis and number theory to investigate two series
$$\sum_{n=1}^\infty\frac{\sin^n\pi\theta n}{n^\alpha},\quad\sum_{n=1}^\infty\frac{\cos^n\pi\theta n}{n^\alpha}.$$
Since they converge absolutely for $\alpha>1$, from now on we keep in view the case $\alpha\leq 1$.
It will be shown that number-theoretical properties of $\theta$ have a strong effect on
the convergence. In case of rational $\theta$ we prove the following statements.

\begin{theorem}\label{cosratio}
The series $$\sum_{n=1}^\infty\frac{\cos^n\pi\theta n}{n^\alpha},$$ where
$\theta=\frac{p}{q}$ with coprime integers $p,q,$ and $0<\alpha\leq 1,$

$1)$ diverges to $+\infty$ if at least one of $p,q$ is even;

$2)$ converges conditionally if both $p,q$ are odd.
\end{theorem}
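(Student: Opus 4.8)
The plan is to split the series according to the divisibility of $n$ by $q$, using the observation that $\cos\pi\theta n=\cos\frac{\pi pn}{q}$ equals $\pm1$ \emph{exactly} when $q\mid n$ (because $\gcd(p,q)=1$), whereas for $q\nmid n$ the number $\bigl|\cos\frac{\pi pn}{q}\bigr|$ is bounded away from $1$ by a constant depending only on $q$. So the behaviour of the whole series is governed by the subseries over the multiples of $q$, while the rest is a harmless, absolutely convergent remainder.

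First I would record two elementary facts. If $n=qm$, then $\cos\frac{\pi pn}{q}=\cos(\pi pm)=(-1)^{pm}$, hence
$$\cos^n\frac{\pi pn}{q}=\bigl((-1)^{pm}\bigr)^{qm}=(-1)^{pqm^2}.$$
If $q\nmid n$, then $pn\not\equiv0\pmod q$; writing $pn\equiv r\pmod{2q}$ with $r\in\{1,\dots,2q-1\}\setminus\{q\}$ we get $\cos\frac{\pi pn}{q}=\cos\frac{\pi r}{q}$, and since the distance from $r/q$ to the nearest integer is at least $1/q$, this yields $\bigl|\cos\frac{\pi pn}{q}\bigr|\le\cos\frac{\pi}{q}<1$. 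Consequently the partial sums decompose as
$$S_N:=\sum_{n\le N}\frac{\cos^n\pi\theta n}{n^\alpha}=\frac1{q^\alpha}\sum_{m\le N/q}\frac{(-1)^{pqm^2}}{m^\alpha}+R_N,\qquad |R_N|\le\sum_{n=1}^\infty\Bigl(\cos\frac{\pi}{q}\Bigr)^n<\infty,$$
so the contribution $R_N$ of the indices with $q\nmid n$ forms an absolutely convergent series, in particular a bounded sequence in $N$.

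For part $1)$, if at least one of $p,q$ is even then $pq$ is even, so $(-1)^{pqm^2}=1$ for every $m$, and the main sum is $q^{-\alpha}\sum_{m\le N/q}m^{-\alpha}\to+\infty$ because $\alpha\le1$; since $R_N$ stays bounded, $S_N\to+\infty$. For part $2)$, if $p$ and $q$ are both odd then $pq$ is odd, so $(-1)^{pqm^2}=(-1)^{m^2}=(-1)^m$, and the main sum becomes $q^{-\alpha}\sum_{m\le N/q}\frac{(-1)^m}{m^\alpha}$, which converges by Leibniz's test (this is where $\alpha>0$ is used); combined with the convergence of $R_N$ this gives convergence of the series. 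The convergence is not absolute, since already $\sum_{q\mid n}\bigl|\cos^n\pi\theta n\bigr|\,n^{-\alpha}=q^{-\alpha}\sum_{m\ge1}m^{-\alpha}=+\infty$; hence it is conditional.

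I do not expect a serious obstacle: the whole argument is elementary. The only points that need a little care are the uniform estimate $\bigl|\cos\frac{\pi pn}{q}\bigr|\le\cos\frac{\pi}{q}$ for $q\nmid n$ (with the harmless degenerate cases $q=1$, where no such $n$ exist, and $q=2$, where these cosines simply vanish), and the legitimacy of regrouping $\sum a_n$ into its $q\mid n$ and $q\nmid n$ parts — which is valid precisely because the second part converges absolutely, so the two pieces may be handled separately.
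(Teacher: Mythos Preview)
Your proof is correct and takes a cleaner route than the paper's. The paper decomposes the series into $q$ (respectively $2q$) subseries indexed by residue classes modulo $q$ (respectively $2q$), treating the subcases $p$ even and $q$ even separately in part~1), and in part~2) pairs the two ``bad'' classes $a=2q$ and $a=a_0$ and uses a first-order Taylor expansion to show their combined contribution behaves like $\sum l^{-(\alpha+1)}$. You instead make a single split into multiples and non-multiples of $q$: the non-multiples are dominated by the geometric series $\sum(\cos\frac{\pi}{q})^n$, while on the multiples $n=qm$ the term is exactly $(-1)^{pqm^2}/(qm)^\alpha$, so the parity of $pq$ settles both parts at once via the $p$-series and Leibniz tests. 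Your argument is shorter and avoids any asymptotic expansion; the paper's residue-class decomposition, on the other hand, is the template it reuses verbatim for the $\sin$-series in Theorem~2, where the ``bad'' residues are no longer just the multiples of $q$ and your shortcut would need to be adapted.
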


\begin{theorem}\label{sinratio}
The series $$\sum_{n=1}^\infty\frac{\sin^n\pi\theta n}{n^\alpha},$$ where $\theta=\frac{p}{q}$ with coprime integers $p,q,$
and $0<\alpha\leq 1$,

$1)$ converges absolutely if $q$ is odd;

$2)$ converges conditionally if $q$ is even and not divisible by $4$;

$3)$ diverges to $+\infty$ if $q$ is divisible by $4$.
\end{theorem}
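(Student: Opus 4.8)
The plan is to use the periodicity of $\sin(\pi\theta n)=\sin(\pi p n/q)$ in $n$ modulo $2q$, so that it takes only finitely many values, and to separate the indices $n$ at which $|\sin(\pi p n/q)|=1$ from those at which $|\sin(\pi p n/q)|<1$. For the latter I would observe that if $m$ is an integer with $m/q\notin\mathbb Z+\tfrac12$, then the forced parity of the integer $2m-(2k+1)q$ keeps $m/q$ at distance at least $\tfrac1{2q}$ from $\mathbb Z+\tfrac12$ when $q$ is odd, and at least $\tfrac1q$ when $q$ is even; hence $|\sin(\pi m/q)|\le c$ for an explicit constant $c=c(q)<1$ ($c=\cos\frac{\pi}{2q}$, resp.\ $c=\cos\frac{\pi}{q}$). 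Consequently the subseries over all $n$ with $|\sin(\pi p n/q)|<1$ is dominated term by term by $\sum c^n$ and converges absolutely. Everything then reduces to the ``extremal'' terms, i.e.\ those $n$ with $\sin(\pi p n/q)=\pm1$.

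Such $n$ are exactly the solutions of $pn\equiv q/2\pmod q$. If $q$ is odd there are none, so the whole series is absolutely convergent and part $1)$ follows. If $q$ is even then $p$ is odd (as $\gcd(p,q)=1$), hence $p$ is invertible modulo $2q$, and the two congruences $pn\equiv q/2$ and $pn\equiv 3q/2\pmod{2q}$ — which yield $\sin(\pi p n/q)=+1$ and $=-1$ respectively — each determine a single residue class, $n\equiv n_1$ and $n\equiv n_1+q\pmod{2q}$, for a unique $n_1\in\{1,\dots,2q\}$. Since the common difference $2q$ is even, every $n$ in either progression has the same parity as $n_1$, and reducing $p n_1\equiv q/2\pmod{2q}$ modulo $2$ shows (using that $p$ is odd) that the parity of $n_1$ equals that of $q/2$.

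The decisive point is that the sign $\sin^n(\pi p n/q)=(\pm1)^n$ is controlled by that parity. If $4\mid q$ then $q/2$ is even, so every extremal $n$ is even and $\sin^n(\pi p n/q)=(\pm1)^n=1$; thus the extremal part of the series equals $\sum_{n\equiv n_1\ (q)} n^{-\alpha}=+\infty$, and adding the bounded remainder shows the series diverges to $+\infty$ — part $3)$. If instead $q\equiv2\pmod4$ then $q/2$ is odd, every extremal $n$ is odd, and the two progressions contribute $+n^{-\alpha}$ on $n\equiv n_1\pmod{2q}$ and $-n^{-\alpha}$ on $n\equiv n_1+q\pmod{2q}$; the extremal part becomes $\sum_{k\ge0}\bigl((n_1+2qk)^{-\alpha}-(n_1+q+2qk)^{-\alpha}\bigr)$, which converges by the mean value theorem (each summand is $O(k^{-\alpha-1})$) or by Leibniz's test. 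Since this extremal part alone makes $\sum|\sin^n(\pi p n/q)|\,n^{-\alpha}$ diverge, the convergence is only conditional — part $2)$.

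The analytic ingredients — the geometric comparison, the distance-to-half-integers estimate, and the mean value bound — are routine. The one genuinely delicate point, and really the heart of the statement, is the bookkeeping in the middle step: recognising that the trichotomy is governed by $q\bmod 4$ precisely because this fixes the parity of the indices $n$ in the two extremal progressions, and hence decides whether the $\pm1$'s reinforce (forcing divergence to $+\infty$) or alternate (giving conditional convergence).
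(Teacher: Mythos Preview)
Your argument is correct and follows the same approach as the paper: decompose by residues modulo $2q$, observe that the non-extremal residue classes contribute absolutely convergent subseries, and analyze the one or two extremal residue classes, with the trichotomy governed by the parity of $q/2$. The paper differs only in presentation---it treats each residue class separately rather than via a single uniform bound $c<1$, and in case~2) it works out the explicit asymptotic $\sim C\,l^{-\alpha-1}$ of the paired difference instead of invoking the mean value theorem or Leibniz's test---but the structure and the key parity observation are the same.
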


Next, we proceed to the case of irrational $\theta$.
Recall the following definition from the theory of
Diophantine approximations: the irrationality measure (or irrationality exponent) of real number $\theta$
is the infimum $\mu(\theta)$ of the set of positive real numbers $\mu$ for which
$$0<\left|\theta-\frac{p}{q}\right|<\frac{1}{q^{\mu}}$$
holds for (at most) finitely many solutions $p/q$ with $p$ and $q$ integers.
If no such $\mu$ exists we set $\mu(\theta)=+\infty$ and say that $\theta$ is a Liouville number.

It follows from this definition that if $\theta$ is a number with finite irrationality measure $\mu$, then for all $\delta>0$ there exists a constant $c=c(\theta,\delta)>0$ such that the inequality
$$\left|\theta-\frac{p}{q}\right|>\frac{c}{q^{\mu+\delta}}$$
is true for all integers $p$ and $q\geq 1$.

Note that $\mu(\theta)=1$ if and only if $\theta$ is rational, and famous Roth's theorem \cite[Chapter VI]{Cassels}
states that for irrational algebraic numbers it holds $\mu(\theta)=2$.
For transcendental $\theta$ we have $\mu(\theta)\geq 2$.

\begin{theorem}\label{sincosirr}
Let $\theta$ be an irrational number with finite irrationality measure $\mu=\mu(\theta)$.
If $\alpha>1-\frac{1}{2\mu-2}$, then both series
$$\sum_{n=1}^\infty\frac{\sin^n\pi\theta n}{n^\alpha},\quad \sum_{n=1}^\infty\frac{\cos^n\pi\theta n}{n^\alpha}$$
converge absolutely.
\end{theorem}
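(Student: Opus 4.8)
The plan is to prove absolute convergence by treating the two series in parallel, since the argument is symmetric. Consider the sine series and write $\delta_n$ for the distance from $\theta n$ to the nearest half-integer $m+\tfrac12$, so that $|\sin\pi\theta n|=\cos(\pi\delta_n)$ with $0\le\delta_n\le\tfrac14$. The elementary inequality $\log\cos x\le -x^2/2$ on $[0,\pi/2)$ (immediate from $\tan x\ge x$) yields the pointwise bound
$$|\sin\pi\theta n|^n\ \le\ \exp\!\Big(-\tfrac{\pi^2}{2}\,n\,\delta_n^2\Big),$$
so everything reduces to proving $\sum_n n^{-\alpha}\exp(-c\,n\,\delta_n^2)<\infty$ with $c=\pi^2/2$. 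For the cosine series one instead takes $\delta_n=\|\theta n\|$, the distance to the nearest integer, and the identical inequality holds; I would carry out the sine case and note that the cosine case is word-for-word the same.

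The heart of the matter is a counting estimate controlling how often $\delta_n$ is small, and this is where the irrationality measure enters. First I would cut the sum into dyadic blocks $n\in[N,2N)$ with $N=2^j$. Within such a block I claim that for any threshold $t>0$ and any $\delta>0$,
$$\#\{\,n\in[N,2N):\delta_n<t\,\}\ \le\ 1+C\,N\,t^{\,1/(\mu-1+\delta)},$$
for a constant $C=C(\theta,\delta)$. The key observation is that if two indices $n_1<n_2$ in the block both have $\delta_{n_i}<t$, then $\theta n_1$ and $\theta n_2$ each lie within $t$ of a half-integer, so upon subtracting the half-integer offsets cancel and $\|\theta(n_2-n_1)\|<2t$. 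Applying the irrationality-measure bound $\|\theta d\|>c'/d^{\,\mu-1+\delta}$ to $d=n_2-n_1<N$ forces every gap between consecutive admissible indices to exceed $D=(c'/2t)^{1/(\mu-1+\delta)}$; since the indices lie in an interval of length $<N$ there are at most $1+N/D$ of them, which is the claim. (Note that the offset cancellation makes the bound for $\theta$ itself suffice, for both half-integers and integers.)

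Next I would convert this count into a bound on the block sum by Abel (Riemann--Stieltjes) summation against the distribution function $F(t)=\#\{n\in[N,2N):\delta_n<t\}$. Integrating $g(t)=\exp(-c\,N t^2)$ against $dF$ and inserting the estimate for $F$, the boundary terms are negligible, and the substitution $u=cNt^2$ reduces the main integral to a Gamma integral, giving
$$\sum_{n\in[N,2N)}\exp(-c\,N\,\delta_n^2)\ \le\ 1+C'\,N^{\,1-\gamma/2},\qquad \gamma=\frac{1}{\mu-1+\delta}.$$
Dividing by $N^\alpha$ (using $n\ge N$) and summing the dyadic blocks produces a geometric series in $2^{\,j(1-\alpha-\gamma/2)}$, which converges precisely when $\alpha>1-\gamma/2$. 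Finally, given $\alpha>1-\tfrac{1}{2\mu-2}$ one chooses $\delta>0$ small enough that $\alpha>1-\tfrac{1}{2(\mu-1+\delta)}=1-\gamma/2$, which is possible since $1-\tfrac{1}{2(\mu-1+\delta)}\to 1-\tfrac{1}{2\mu-2}$ as $\delta\to0^{+}$.

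I expect the main obstacle to be the counting lemma, and in particular isolating the correct threshold exponent $t^{1/(\mu-1+\delta)}$ and verifying that the Abel-summation integral contributes exactly the exponent $N^{1-\gamma/2}$ that matches the stated critical value $\alpha=1-\tfrac{1}{2\mu-2}$; the analytic inequality for the cosine and the reduction to the exponential sum are routine by comparison.
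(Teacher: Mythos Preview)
Your argument is correct and reaches the same threshold $\alpha>1-\frac{1}{2\mu-2}$, but the organization differs from the paper's. The paper slices the series according to dyadic level sets of $|\cos(\pi n\theta)|$, grouping terms by $1-2^{-s}\le |\cos(\pi n\theta)|<1-2^{-s-1}$, and proves a gap lemma showing that the $k$-th index in such a level set satisfies $n_k\gg k\,\varepsilon^{-\nu+\delta}$ with $\varepsilon=2^{-s}$ and $\nu=\frac{1}{2\mu-2}$; it then sums the resulting series using the polylogarithm asymptotic $\sum_k z^k/k^\alpha\sim\Gamma(1-\alpha)(\ln\tfrac{1}{z})^{\alpha-1}$ (quoted from Gelfond) for $\alpha<1$, and the identity $\sum_k z^k/k=-\ln(1-z)$ for $\alpha=1$, treating these cases separately. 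You instead slice dyadically in $n$ and, within each block, integrate by parts against the distribution function of $\delta_n$; the Gamma integral that emerges plays the role of the polylogarithm asymptotic and handles all $\alpha\le 1$ at once. The Diophantine core---two indices with small $\delta$ force $\|\theta(n_2-n_1)\|$ small, hence a large gap via the irrationality-measure inequality---is identical in both proofs; your counting bound $F(t)\le 1+CNt^{1/(\mu-1+\delta)}$ is just the contrapositive of the paper's lower bound on $n_k$. Your route is a bit more self-contained (no external asymptotic) and avoids the case split at $\alpha=1$; the paper's level-set decomposition makes the structure of the ``bad'' indices more explicit. One harmless slip: the distance to the nearest half-integer is bounded by $\tfrac12$, not $\tfrac14$, but $\log\cos x\le -x^2/2$ holds on all of $[0,\pi/2)$, so the exponential bound is unaffected.
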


As a corollary we immediately obtain that both series converge absolutely for any irrational algebraic number $\theta$
when $\alpha>\frac12$. For instance, using the well known result $\mu(e)=2$ and the recently obtained estimate $\mu(1/\pi)=\mu(\pi)\leq 7.6063\ldots$ from \cite{salikhov} we see that both series
$$
\sum_{n=1}^\infty\frac{\sin^n(\pi en)}{n^\alpha},\quad \sum_{n=1}^\infty\frac{\cos^n(\pi en)}{n^\alpha}
$$
converge absolutely for $\alpha>\frac12$, and both series
$$\sum_{n=1}^\infty\frac{\sin^nn}{n^\alpha},\quad \sum_{n=1}^\infty\frac{\cos^nn}{n^\alpha}$$
converge absolutely for $\alpha>0.9243\ldots$.

Moreover, the key result of continued fractions metric theory (see \cite[\S\,14, Theorem 32]{khinchin})
states that $\mu(\theta)=2$ holds for almost all real $\theta$. Therefore from Theorem 3 one can deduce
the upcoming assertion.

\begin{theorem}\label{metric}
Let $\alpha>\frac12$. Then both series
$$\sum_{n=1}^\infty\frac{\sin^n \pi\theta n}{n^\alpha},\quad \sum_{n=1}^\infty\frac{\cos^n\pi\theta n}{n^\alpha}$$
converge absolutely for almost all real $\theta$.
\end{theorem}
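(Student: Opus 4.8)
The plan is to obtain Theorem~\ref{metric} as an immediate corollary of Theorem~\ref{sincosirr} together with the classical metric fact, recalled above, that $\mu(\theta)=2$ for almost every real~$\theta$ (Khinchin, \cite[\S\,14, Theorem~32]{khinchin}). The only point requiring any attention is the bookkeeping of exponents: at $\mu=2$ the threshold appearing in Theorem~\ref{sincosirr} is $1-\frac{1}{2\mu-2}=1-\frac12=\frac12$, which is exactly the range of $\alpha$ assumed here, so there is no loss.

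In detail, I would first let $E=\{\theta\in\mathbb R:\mu(\theta)\neq 2\}$ and invoke the cited theorem to conclude that $E$ is a Lebesgue null set. (One may note in passing that $E$ contains the rationals, the Liouville numbers, and every number whose irrationality measure lies strictly between $2$ and $+\infty$; all of these together still form a set of measure zero.) Now fix $\alpha>\frac12$ and take an arbitrary $\theta\in\mathbb R\setminus E$. Then $\theta$ is irrational with finite irrationality measure $\mu=\mu(\theta)=2$, so the hypothesis $\alpha>1-\frac{1}{2\mu-2}$ of Theorem~\ref{sincosirr} reads $\alpha>\frac12$, which holds by assumption. Hence both series converge absolutely for this $\theta$. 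Since this is true for every $\theta$ outside the null set $E$, both series converge absolutely for almost all real $\theta$, which is precisely the assertion of Theorem~\ref{metric}.

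I expect no genuine obstacle in this deduction: the analytic and number-theoretic content is carried entirely by Theorem~\ref{sincosirr} (whose proof presumably combines Weyl-type estimates for the sums built from $\sin^n\pi\theta n$ and $\cos^n\pi\theta n$ with the lower bound $\left|\theta-\tfrac pq\right|\gg q^{-\mu-\delta}$ afforded by a finite irrationality measure), while the metric input is the classical theorem of Khinchin. If one wishes to be punctilious about the phrase ``almost all real $\theta$'' rather than ``almost all $\theta$ in a bounded interval'', it suffices to observe that both the convergence of the series and the value of $\mu(\theta)$ are unchanged when $\theta$ is replaced by $\theta+2$, so the exceptional set $E$ is $2$-periodic and the statement over all of $\mathbb R$ follows from its restriction to, say, $[0,2)$.
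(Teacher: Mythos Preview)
Your deduction is correct and is exactly the argument the paper itself sketches immediately before stating Theorem~\ref{metric}: combine Theorem~\ref{sincosirr} with Khinchin's theorem that $\mu(\theta)=2$ almost everywhere, and note that at $\mu=2$ the threshold $1-\frac{1}{2\mu-2}$ equals $\frac12$.

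That said, the paper then presents a second, self-contained proof (Section~4, ``Analytic proof of Theorem~4'') which is genuinely different and worth knowing. Instead of invoking Theorem~\ref{sincosirr} and the Diophantine metric result, it integrates termwise over $\theta\in[0,1]$: using the Wallis integral $\int_0^{\pi/2}\sin^n x\,dx\asymp n^{-1/2}$ one gets
\[
\int_0^1\frac{|\sin(\pi n\theta)|^n}{n^\alpha}\,d\theta\asymp\frac{1}{n^{\alpha+1/2}},
\]
so the series of integrals converges when $\alpha>\frac12$, and B.~Levi's monotone convergence theorem then gives almost-everywhere convergence of the original series. This route is more elementary---no irrationality measures, no continued-fraction metric theory, no Theorem~\ref{sincosirr}---but it yields only the almost-everywhere statement, whereas your route, via Theorem~\ref{sincosirr}, also identifies a concrete full-measure set (namely $\{\theta:\mu(\theta)=2\}$) on which convergence holds.
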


Yet, we submit another (and more simple) proof which is independent of Theorem 3 and involves methods of real analysis.

In addition to the investigation of the case of irrational $\theta$, we construct the set
of transcendental numbers for which both series converge only for $\alpha>1$.

\begin{theorem}\label{divergence}
There exists such an everywhere dense in $\mathbb R$ set of irrational numbers $E$,
that for any $\xi\in E$ both series
$$\sum_{n=1}^\infty\frac{\sin^n\pi\xi n}{n^{\alpha}},\quad \sum_{n=1}^\infty\frac{\cos^n\pi\xi n}{n^{\alpha}}$$
diverge for all $\alpha\leq 1$.
\end{theorem}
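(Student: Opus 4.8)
The plan is to realise $\xi$ as a very rapidly convergent limit of rationals $p_k/q_k$ with $4\mid q_k$, chosen so that on a long initial range of $n$ the two series attached to $\xi$ are indistinguishable from those attached to $p_k/q_k$; the latter diverge to $+\infty$ by Theorems~\ref{cosratio} and~\ref{sinratio} (this is the reason for requiring $4\mid q_k$: it forces $q_k$ to be even, so the cosine series for $p_k/q_k$ diverges, and $4\mid q_k$, so the sine series for $p_k/q_k$ diverges). The first ingredient is a crude analytic comparison. Since $\cos$ is $1$-Lipschitz and $|a^n-b^n|\le n|a-b|$ for $|a|,|b|\le 1$, one gets $|\cos^n x-\cos^n y|\le n|x-y|$, and similarly for $\sin$. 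Hence for every $N$, every $\alpha\in(0,1]$ and every rational $p/q$,
$$\Bigl|\sum_{n\le N}\frac{\cos^n(\pi\xi n)}{n^\alpha}-\sum_{n\le N}\frac{\cos^n(\pi pn/q)}{n^\alpha}\Bigr|\le\pi|\xi-p/q|\sum_{n\le N}n^{2-\alpha}\le\pi|\xi-p/q|N^{3},$$
and the same bound holds with $\sin$ in place of $\cos$. So if $|\xi-p/q|<(\pi N^{3})^{-1}$, the two $N$-th partial sums differ by less than $1$.

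The second ingredient is a lower bound for the \emph{rational} partial sums that is uniform in $\alpha$. Fix $p/q$ with $4\mid q$ and $\gcd(p,q)=1$. For $q\mid n$, writing $n=qm$ and using that $q$ is even, $\cos^n(\pi pn/q)=\cos^{qm}(\pi pm)=(-1)^{pqm^{2}}=1$, while for $q\nmid n$ one has $|\cos(\pi pn/q)|\le\rho_0<1$ with $\rho_0=\rho_0(p,q)$ independent of $n,N,\alpha$. Consequently, for all $\alpha\in(0,1]$ and all $N\ge q$,
$$\sum_{n\le N}\frac{\cos^n(\pi pn/q)}{n^\alpha}\ge\frac1{q^{\alpha}}\sum_{m\le N/q}\frac1{m^{\alpha}}-\sum_{n\ge1}\rho_0^{\,n}\ge\frac1q\log\frac Nq-\frac{\rho_0}{1-\rho_0}.$$
Likewise, when $4\mid q$ the set of $n$ with $\sin^n(\pi pn/q)=1$ contains a full arithmetic progression of difference dividing $2q$ consisting of even integers only (this is the mechanism behind Theorem~\ref{sinratio}(3)), while off finitely many residues $|\sin(\pi pn/q)|\le\rho_1<1$; so the same computation gives a bound
$$\sum_{n\le N}\frac{\sin^n(\pi pn/q)}{n^\alpha}\ge\frac1{2q}\log\frac N{2q}-C(p,q)$$
for all $\alpha\in(0,1]$ and all sufficiently large $N$, with $C(p,q)$ independent of $\alpha,N$.

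Now I would build $\xi$. It suffices to place one such $\xi$ in an arbitrary open interval $I$, since the set $E$ of all of them is then dense in $\mathbb R$. The fractions $p/4^{j}$ with $j\ge1$ and $p$ odd are dense, so one can choose inductively $p_k/q_k$ (with $q_k=4^{j_k}$, $\gcd(p_k,q_k)=1$, $q_k$ strictly increasing, $p_1/q_1\in I$) and integers $N_1<N_2<\cdots$ with $N_k\ge q_k^{\,k}$ as follows: given $p_k/q_k$, use the bounds above to choose $N_k$ so large that both right-hand sides exceed $k+1$; then choose $p_{k+1}/q_{k+1}\ne p_k/q_k$ with $|p_{k+1}/q_{k+1}-p_k/q_k|<2^{-k-1}(\pi N_k^{3})^{-1}$, keeping every $p_l/q_l$ inside $I$. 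The sequence is Cauchy; its limit $\xi\in I$ satisfies $0<|\xi-p_k/q_k|<(\pi N_k^{3})^{-1}\le q_k^{-3k}$ for every $k$, so $\xi$ cannot be rational (a rational $a/b$ obeys $|a/b-p/q|\ge(bq)^{-1}$) and is in fact a Liouville number. Combining the two ingredients, for every $k$ and every $\alpha\in(0,1]$,
$$\sum_{n\le N_k}\frac{\cos^n(\pi\xi n)}{n^\alpha}>k\qquad\text{and}\qquad\sum_{n\le N_k}\frac{\sin^n(\pi\xi n)}{n^\alpha}>k,$$
so both series have unbounded partial sums and therefore diverge, for every $\alpha\le1$.

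The step I expect to be the main obstacle is the second ingredient: extracting a lower bound for the rational partial sums valid for \emph{all} $\alpha\in(0,1]$ simultaneously (the same $\xi$ must serve every $\alpha$). This is exactly what dictates the splitting into a resonant part — an arithmetic progression on which the $n$-th power is identically $1$, contributing $\gg\log N$ for every $\alpha\le1$ because $n^{-\alpha}\ge n^{-1}$ — and a non-resonant part, which is dominated by a convergent geometric series and hence bounded independently of $\alpha$ and $N$. It is also what forces the choice $4\mid q_k$, which is precisely what makes \emph{both} rational series diverge at once.
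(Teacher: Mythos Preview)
Your proposal is correct and follows essentially the same strategy as the paper's proof: approximate $\xi$ by rapidly convergent rationals with denominator divisible by $4$, use a Lipschitz/mean-value bound to transfer partial sums from $\xi$ to the rational approximant, and exploit a lower bound for the rational partial sums that is uniform in $\alpha\in(0,1]$ via the resonant/non-resonant splitting. The paper packages your ``second ingredient'' as a separate lemma (Lemma~\ref{divrate}) derived from the proofs of Theorems~\ref{cosratio} and~\ref{sinratio}, and builds $\xi$ explicitly as $\xi_0+\sum_i 2^{-\nu_i}$ with a recursively defined $\nu_i$, whereas you use a Cauchy-sequence construction with denominators $4^{j_k}$; these are cosmetic differences only.
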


\section{Proof of Theorem 1}

1) Decompose the series into the sum of $q$ series each
corresponding to values $n\equiv a\pmod q$, $a=1,2,\ldots,q$:
$$\sum_{n=1}^\infty\frac{\cos^n\frac{\pi pn}{q}}{n^\alpha}=\sum_{a=1}^q\sum_{l=0}^\infty \frac{\left(\cos\frac{\pi p(lq+a)}{q}\right)^{lq+a}}{(lq+a)^\alpha}=\sum_{a=1}^q\sum_{l=0}^\infty \frac{\left(\cos(\frac{\pi ap}{q}+\pi pl)\right)^{lq+a}}{(lq+a)^\alpha}.$$

We start with $p$ even, $p=2p_1$. Since $(p,q)=1$, we obtain that $q$ is odd. Among the
numbers $\cos\frac{2\pi ap_1}{q}$, $a=1,2,\ldots,q$, all but one lie in the interval
$(-1;1)$. Indeed, if $a=q$, then $\cos\frac{2\pi ap_1}{q}=\cos 2\pi p_1=1$; for
$1\leq a<q$ we have $\frac{2 ap_1}{q}$ noninteger, hence $\cos\frac{2\pi
ap_1}{q}\neq\pm 1$. Thus in this case the series can be written as
$$\sum_{a=1}^q \sum_{l=0}^\infty \frac{\left(\cos\frac{2\pi
ap_1}{q}\right)^{lq+a}}{(lq+a)^\alpha}=\sum_{a=1}^{q-1}{\textstyle\cos^a\frac{2\pi
ap_1}{q}}\sum_{l=0}^\infty \frac{\left(\cos^q\frac{2\pi
ap_1}{q}\right)^{l}}{(lq+a)^\alpha}+\sum_{l=0}^\infty \frac{1}{q^\alpha(l+1)^\alpha},$$
Since $|\cos^q\frac{2\pi ap_1}{q}|<1$, the first $q-1$ series in the sum converge absolutely,
and the last series diverges for $0<\alpha\leq 1$. Therefore the series under consideration
diverges to $+\infty$.

Next, let $q=2q_1$ and $p$ is odd. The equality $\cos(\frac{\pi ap}{2q_1}+\pi pl)=\pm 1$
is equivalent to $ap\equiv 0\pmod {2q_1}$ which holds only when $a=q=2q_1$. Hence the series
can also be represented as
$$\sum_{a=1}^{2q_1} \sum_{l=0}^\infty \frac{\left(\cos(\frac{\pi ap}{2q_1}+\pi pl)\right)^{2lq_1+a}}{(2lq_1+a)^\alpha}=$$
$$=\sum_{a=1}^{2q_1-1}\sum_{l=0}^\infty \frac{\left(\cos^{2q_1}\frac{\pi ap}{2q_1}\right)^{l}\cos^a(\frac{\pi ap}{2q_1}+\pi pl)}{(2lq_1+a)^\alpha}+\sum_{l=0}^\infty \frac{1}{q^\alpha(l+1)^\alpha}.$$
In the same way as in the previous case we observe that the series diverges to $+\infty$.

2) Let both $p,q$ be odd. Decompose the series into the sum of $2q$ series each
corresponding to values $n\equiv a\pmod {2q}$, $a=1,2,\ldots,2q$:
$$\sum_{n=1}^\infty\frac{\cos^n\frac{\pi pn}{q}}{n^\alpha}=\sum_{a=1}^{2q}\sum_{l=0}^\infty \frac{\left(\cos\frac{\pi p(2lq+a)}{q}\right)^{2lq+a}}{(2lq+a)^\alpha}=\sum_{a=1}^{2q}\sum_{l=0}^\infty \frac{\left(\cos\frac{\pi ap}{q}\right)^{2lq+a}}{(2lq+a)^\alpha}.$$

The equality $\cos\frac{\pi ap}{q}=1$ is equivalent to $ap\equiv 0\pmod{2q}$ which holds only when
$a=2q$, because $p$ and $2q$ are coprime. Next, $\cos\frac{\pi ap}{q}=-1$ means that
$ap\equiv q\pmod{2q}$, which has a unique solution $a=a_0$, $1\leq a_0\leq 2q-1$. Note that $a_0$ is odd.
Hence $|\cos\frac{\pi ap}{q}|<1$ whenever $a\neq a_0$ and $a\neq 2q$.
Rewriting the series as
$$\sum_{a=1}^{2q}\sum_{l=0}^\infty \frac{\left(\cos\frac{\pi ap}{q}\right)^{2lq+a}}{(2lq+a)^\alpha}=$$
$$=\sum_{\substack{a=1\\a\neq a_0}}^{2q-1}{\textstyle \cos^a\frac{\pi ap}{q}}\sum_{l=0}^\infty \frac{\left(\cos^{2q}\frac{\pi ap}{q}\right)^{l}}{(2lq+a)^\alpha}+\sum_{l=0}^\infty \left(\frac{1}{(2ql+2q)^\alpha}-\frac{1}{(2ql+a_0)^\alpha}\right),$$
we conclude that $2q-2$ series in the first sum converge absolutely.
Now
$$\frac{1}{(2ql+2q)^\alpha}-\frac{1}{(2ql+a_0)^\alpha}=\frac{1}{(2ql)^\alpha}
\left(\left(1+\frac{1}{l}\right)^{-\alpha}-\left(1+\frac{a_0}{2ql}\right)^{-\alpha}\right)=$$
$$=\frac{1}{(2ql)^\alpha} \left(1-\frac{\alpha}{l}+O\left(\frac{1}{l}\right)-1+\frac{\alpha a_0}{2ql}+O\left(\frac{1}{l}\right)\right)\sim \frac{\alpha(\frac{a_0}{2q}-1)}{(2q)^\alpha}\cdot\frac{1}{l^{\alpha+1}}$$
for $l\to\infty$. Since $\alpha+1>1,$ the series converges.
It remains to be seen that the series of absolute values
$$\sum_{a=1}^{2q}\sum_{l=0}^\infty \frac{\left|\cos\frac{\pi ap}{q}\right|^{2lq+a}}{(2lq+a)^\alpha}=$$
$$=\sum_{\substack{a=1\\a\neq a_0}}^{2q-1}{\textstyle \cos^a\frac{\pi ap}{q}}\sum_{l=0}^\infty \frac{\left(\cos^{2q}\frac{\pi ap}{q}\right)^{l}}{(2lq+a)^\alpha}+\sum_{l=0}^\infty \left(\frac{1}{(2ql+2q)^\alpha}+\frac{1}{(2ql+a_0)^\alpha}\right),$$
evidently diverges, thus the series under consideration converges conditionally.

\section{Proof of Theorem 2}

By analogy with the previous proof, decompose the series into the sum of $2q$ series:
$$\sum_{n=1}^\infty\frac{\sin^n\frac{\pi pn}{q}}{n^\alpha}=\sum_{a=1}^{2q}\sum_{l=0}^\infty \frac{\left(\sin\frac{\pi p(2lq+a)}{q}\right)^{2lq+a}}{(2lq+a)^\alpha}=\sum_{a=1}^{2q}\sum_{l=0}^\infty \frac{\left(\sin\frac{\pi ap}{q}\right)^{2lq+a}}{(2lq+a)^\alpha}.$$

1) If $q$ is odd, then the equality $\frac{\pi ap}{q}=\frac{\pi}{2}+\pi k$ means that
$2ap=q+2kq$, which is impossible for any integer $k$. Hence in this case $|\sin\frac{\pi
ap}{q}|<1$ for all $a=1,2,\ldots,2q$, thus all the series in the right-hand side converge absolutely,
and so does the series under study.

2) Let $q=2q_1$, where $q_1$ is odd. Then the equality $\frac{\pi
ap}{2q_1}=\frac{\pi}{2}+2\pi k$ holds if and only if $ap\equiv q_1\pmod {4q_1}$. Since
$(p,4q_1)=1$, the congruence has a unique solution $a\equiv a_0\pmod {4q_1}$ where
$1\leq a_0< 4q_1=2q$. In a similar way, $\frac{\pi ap}{2q_1}=-\frac{\pi}{2}+2\pi k$
is equivalent to $ap\equiv -q_1\pmod {4q_1}$, which is true only for $a=2q-a_0$.
Note that $a_0$ is odd. Then for $a\neq a_0$, $a\neq 2q-a_0$ it holds
$|\sin\frac{\pi ap}{q}|<1$, so we can rewrite the series as
$$\sum_{\substack{a=1\\a\neq a_0\\a\neq 2q-a_0}}^{2q}\sum_{l=0}^\infty \frac{\left(\sin\frac{\pi ap}{q}\right)^{2lq+a}}{(2lq+a)^\alpha}+\sum_{l=0}^\infty\left(\frac{1}{(2lq+a_0)^\alpha}-\frac{1}{(2lq+2q-a_0)^\alpha}\right).$$
and see that $2q-2$ series in the first sum converge absolutely. The last series is also convergent as far as
$$\frac{1}{(2ql+a_0)^\alpha}-\frac{1}{(2ql+2q-a_0)^\alpha}=\frac{1}{(2ql)^\alpha}
\left(\left(1+\frac{a_0}{2ql}\right)^{-\alpha}-\right.$$
$$\left.-\left(1+\frac{2q-a_0}{2ql}\right)^{-\alpha}\right)=\frac{1}{(2ql)^\alpha} \left(1-\frac{a_0\alpha}{2ql}+O\left(\frac{1}{l}\right)-1+\frac{\alpha (2q-a_0)}{2ql}+\right.$$
$$\left.+O\left(\frac{1}{l}\right)\right)\sim \frac{\alpha(1-\frac{a_0}{q})}{(2q)^\alpha}\cdot\frac{1}{l^{\alpha+1}}\quad\mbox{ when }\quad l\to\infty.$$
It converges conditionally, since the series of absolute values
$$\sum_{n=1}^\infty\frac{|\sin\frac{\pi pn}{q}|^n}{n^\alpha}=$$
$$=\sum_{\substack{a=1\\a\neq a_0\\a\neq 2q-a_0}}^{2q}\sum_{l=0}^\infty \frac{\left|\sin\frac{\pi ap}{q}\right|^{2lq+a}}{(2lq+a)^\alpha}+\sum_{l=0}^\infty\left(\frac{1}{(2lq+a_0)^\alpha}+\frac{1}{(2lq+2q-a_0)^\alpha}\right)$$
diverges.

3) After all, let $q=4q_1$. Then the equality $\frac{\pi ap}{4q_1}=\frac{\pi}{2}+\pi k$
is equivalent to $ap\equiv 2q_1\pmod {4q_1}$. Since $(p,4q_1)=1$, the congruence has a unique solution
$a\equiv a_0\pmod {4q_1}$ where $1\leq a_0< 4q_1=q$ and $a_0$ is even.
Thus the series under study can be rewritten as
$$\sum_{\substack{a=1\\a\neq a_0\\a\neq q+a_0}}^{2q}\sum_{l=0}^\infty \frac{\left(\sin\frac{\pi ap}{q}\right)^{2lq+a}}{(2lq+a)^\alpha}+\sum_{l=0}^\infty\left(\frac{1}{(2lq+a_0)^\alpha}+\frac{1}{(2lq+q+a_0)^\alpha}\right),$$
and therefore diverges to $+\infty$. This completes the proof.

\section{Proof of Theorem 3}

We assume that $\theta>0$ and prove that the series
$$\sum_{n=1}^\infty\frac{|\cos(\pi n\theta)|^n}{n^\alpha}$$
converges for $\alpha>1-\frac{1}{2\mu-2}$. The sin-series can be considered in the same way.
Since this is the series of absolute values, by the well-known fact its
convergence is independent of the permutation of its terms.
Thus the series can be represented as
$$\sum_{n=1}^\infty\frac{|\cos(\pi n\theta)|^n}{n^\alpha}=\sum_{s=0}^\infty\sum_{\substack{n\in\mathbb{N}\\1-\frac{1}{2^{s}}\leq |\cos(\pi n\theta)|<1-\frac{1}{2^{s+1}}}}\frac{|\cos(\pi n\theta)|^n}{n^\alpha}=\sum_{s=0}^\infty S_{\frac{1}{2^{s}}},$$
where
$$S_\varepsilon=\sum_{\substack{n\in\mathbb{N}\\1-\varepsilon\leq |\cos(\pi n\theta)|< 1-\varepsilon/2}}\frac{|\cos(\pi n\theta)|^n}{n^\alpha}.$$

We need an upper estimate for $S_\varepsilon$, where $\varepsilon$ is fixed and small
enough (greater values of $\varepsilon$ have an effect on at most finitely many terms of the
series $\sum_{s=0}^\infty S_{\frac{1}{2^{s}}}$). For that purpose we prove the following statement.

\begin{lem}\label{gaps}
Let $\theta$ be a positive irrational number with finite irrationality measure $\mu=\mu(\theta)$, $\nu=\frac{1}{2\mu-2}$, $0<\varepsilon<\min(1;1-\cos\frac{\pi\theta}{2})$, and let $\{n_k\}_{k=1}^\infty$ be the increasing sequence of positive integers for which
$$1-\varepsilon\leq |\cos(\pi n\theta)|< 1-\varepsilon/2.$$
Then for all $\delta\in(0;\nu)$ there exists such a number $A=A(\theta,\delta)>0$ (independent of $\varepsilon$) that for all positive integers $k$ the following inequality
holds: $$n_k\geq Ak\varepsilon^{-\nu+\delta}.$$
\end{lem}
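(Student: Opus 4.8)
The plan is to rewrite the hypothesis on $|\cos(\pi n\theta)|$ as an upper bound for $\|n\theta\|$, the distance from $n\theta$ to the nearest integer, and then to exploit the Diophantine inequality coming from $\mu(\theta)<\infty$ \emph{uniformly in the denominator} so as to show not just that each $n_k$ is large, but that consecutive $n_k$'s are uniformly far apart; telescoping those gaps then supplies the factor $k$.

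First I would record the elementary reduction. For every real $x$ one has $|\cos(\pi x)|=\cos(\pi\|x\|)$ with $\|x\|\in[0,\tfrac12]$, so $1-\varepsilon\le|\cos(\pi n\theta)|$ is equivalent to $\|n\theta\|\le\tfrac1\pi\arccos(1-\varepsilon)$. An elementary estimate (the function $\pi\sqrt\varepsilon-\arccos(1-\varepsilon)$ vanishes at $0$ and is increasing on $[0,1]$) gives $\arccos(1-\varepsilon)\le\pi\sqrt\varepsilon$ for $0\le\varepsilon\le1$, so every term of the sequence $\{n_k\}$ obeys
$$\|n\theta\|\le\sqrt\varepsilon .$$
(The restriction $\varepsilon<1-\cos\tfrac{\pi\theta}{2}$ keeps these remainders below $\theta/2$, which is all that is used to make the nearest-integer bookkeeping below unambiguous.)

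Next I would fix $\delta\in(0;\nu)$ and choose an auxiliary $\delta'>0$ so small that $\frac{1}{2(\mu-1+\delta')}\ge\nu-\delta$; this is possible since $\mu>1$ and the left-hand side increases to $\nu=\tfrac1{2\mu-2}$ as $\delta'\to0^+$. By the property of the irrationality measure recalled in the introduction there is $c=c(\theta,\delta')>0$ with $|\theta-\tfrac pq|>c\,q^{-\mu-\delta'}$ for all integers $p$ and all $q\ge1$; multiplying by $q$ this reads $|q\theta-p|>c\,q^{-(\mu-1+\delta')}$ for all $p\in\mathbb Z$, $q\ge1$. Now for consecutive terms $n_{j-1}<n_j$ write $n_j\theta=a_j+r_j$, $n_{j-1}\theta=a_{j-1}+r_{j-1}$ with $a_j,a_{j-1}\in\mathbb Z$ and $|r_j|,|r_{j-1}|\le\sqrt\varepsilon$. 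Then $\bigl|(n_j-n_{j-1})\theta-(a_j-a_{j-1})\bigr|=|r_j-r_{j-1}|\le2\sqrt\varepsilon$, while the Diophantine bound applied with $q=n_j-n_{j-1}\ge1$ and $p=a_j-a_{j-1}$ bounds the same quantity below by $c\,(n_j-n_{j-1})^{-(\mu-1+\delta')}$. Comparing, and using $\varepsilon\le1$ together with $\tfrac1{2(\mu-1+\delta')}\ge\nu-\delta$,
$$n_j-n_{j-1}>\Bigl(\tfrac c2\Bigr)^{1/(\mu-1+\delta')}\varepsilon^{-1/(2(\mu-1+\delta'))}\ge A\,\varepsilon^{-\nu+\delta},\qquad A:=\Bigl(\tfrac c2\Bigr)^{1/(\mu-1+\delta')},$$
and $A$ depends only on $\theta$ and $\delta$. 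The same computation with $q=n_1$ (and $r_1$ in place of $r_j-r_{j-1}$) gives $n_1>c^{1/(\mu-1+\delta')}\varepsilon^{-1/(2(\mu-1+\delta'))}\ge A\,\varepsilon^{-\nu+\delta}$, so by telescoping $n_k=n_1+\sum_{j=2}^{k}(n_j-n_{j-1})>kA\,\varepsilon^{-\nu+\delta}$, which is the assertion.

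The one substantive point is the spacing estimate: bounding $n_1$ alone would only yield a single large integer, whereas the lemma needs linear growth in $k$. What makes it work is that $|\theta-p/q|>c\,q^{-\mu-\delta'}$ holds for \emph{every} denominator, so it applies to the difference $n_j-n_{j-1}$, and that difference lies within $2\sqrt\varepsilon$ of an integer precisely because each of $n_j\theta$, $n_{j-1}\theta$ does. The remaining care is purely bookkeeping: one checks that $A$ contains no $\varepsilon$ (the $\sqrt\varepsilon$-bound is absolute, $c$ comes from $\mu(\theta)$, and $\varepsilon\le1$ is used only to replace $\varepsilon^{-1/(2(\mu-1+\delta'))}$ by $\varepsilon^{-\nu+\delta}$), and that the choice of $\delta'$ is feasible, which it is since $\mu>1$.
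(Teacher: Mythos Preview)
Your proof is correct and follows the same strategy as the paper: bound $\|n_k\theta\|$ by $O(\sqrt\varepsilon)$, use the Diophantine hypothesis to show that consecutive gaps $n_j-n_{j-1}$ exceed $A\varepsilon^{-\nu+\delta}$, and telescope. The only difference is that the paper, after reaching $|(n_j-n_{j-1})\theta-l|\le\frac{2}{\pi}\arccos(1-\varepsilon)$, divides by $l$ and applies the irrationality-measure bound to $1/\theta$ with denominator $l$ (which forces the separate verification $l\ge 1$, the place where the hypothesis $\varepsilon<1-\cos\frac{\pi\theta}{2}$ enters), whereas you apply the bound directly to $\theta$ with denominator $n_j-n_{j-1}$; your route is slightly cleaner and shows incidentally that this restriction on $\varepsilon$ is not really needed.
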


\begin{proof}
Let $m=n_k$ and $n=n_{k+1}$ be two neighbor terms of $\{n_k\}_{k=1}^\infty$.
Since $|\cos(\pi n\theta)|\geq 1-\varepsilon$ and $|\cos(\pi m\theta)|\geq 1-\varepsilon$, we obtain
$$-\arccos(1-\varepsilon)+\pi l_n\leq \pi n\theta\leq \arccos(1-\varepsilon)+\pi l_n,$$
$$-\arccos(1-\varepsilon)+\pi l_m\leq \pi m\theta\leq \arccos(1-\varepsilon)+\pi l_m$$
for some $l_n,l_m\in\mathbb{N}$, so
$$|n\theta-l_n|\leq\frac{1}{\pi}\arccos(1-\varepsilon),\qquad |m\theta-l_m|\leq\frac{1}{\pi}\arccos(1-\varepsilon).$$
Then
$$|(n-m)\theta-l|\leq|n\theta-l_n|+|m\theta-l_m|\leq\frac{2}{\pi}\arccos(1-\varepsilon),$$
where $l=l_n-l_m\geqslant0$. Note that if $\varepsilon<\min(1;1-\cos\frac{\pi\theta}{2})$ then
$l_n=l_m$ can hold only when $(n-m)\theta<\frac{2}{\pi}\arccos(1-\varepsilon)<\frac{2}{\pi}\frac{\pi\theta}{2}=\theta$,
i.e. $n-m<1$, a contradiction. Hence, $l=l_n-l_m>0$. Dividing by $l\theta>0$ we get an estimate
$$\left|\frac{n-m}{l}-\frac{1}{\theta}\right|\leq\frac{2}{l\pi\theta}\arccos(1-\varepsilon)<\frac{4\sqrt{\varepsilon}}{l\pi\theta}.$$
Here we also used the inequality $\arccos(1-\varepsilon)<2\sqrt{\varepsilon}$ which holds
when $0<\varepsilon<1$. Indeed, for the function $f(x)=\arccos(1-x)-2\sqrt{x}$ we have
$f'(x)=\frac{1}{\sqrt{x(2-x)}}-\frac{1}{\sqrt{x}}<0$ when $x\in(0;1)$, so $f(x)$ is
decreasing in this interval, thus $f(x)<f(0)=0$.

Alternately, $\frac{1}{\theta}$ has a finite irrationality measure $\mu$, thus for all $\delta'>0$ the inequality
$$\left|\frac{n-m}{l}-\frac{1}{\theta}\right|>\frac{c}{l^{\mu+\delta'}}$$
is true for all integers $l\geq 1$, where $c=c(\theta,\delta')$. Hence we have
$$\frac{c}{l^{\mu+\delta'}}<\frac{4\sqrt{\varepsilon}}{l\pi\theta},\qquad l>\left(\frac{\pi\theta c}{4\sqrt{\varepsilon}}\right)^\frac{1}{\mu-1+\delta'}.$$
Therefore,
$$n_{k+1}-n_k=n-m\geq \frac{l-1}{\theta}\geq
\frac{l}{2\theta}>A\varepsilon^{-\frac{1}{2\mu-2+2\delta'}}=A\varepsilon^{-\nu+\delta},$$ where
$\delta'=\frac{\delta}{2\nu(\nu-\delta)}>0$ and $A=\frac{1}{2\theta}\left(\frac{\pi\theta c}{4}\right)^\frac{1}{\mu-1+\delta'}$ is
independent of $\varepsilon$.

Finally, we note that an analogous argument leads to the same estimate for $n_1$.
Thus the statement of lemma follows by induction.
\end{proof}

Next, we apply Lemma \ref{gaps} to handle $S_\varepsilon$. Since
$n_k\geq Ak\varepsilon^{-\nu}$ we have
$$S_\varepsilon=\sum_{\substack{n\in\mathbb{N}\\1-\varepsilon\leq |\cos(\pi n\theta)|< 1-\varepsilon/2}}\frac{|\cos(\pi n\theta)|^n}{n^\alpha}=\sum_{k=1}^\infty\frac{|\cos(\pi n_k\theta)|^{n_k}}{n_k^\alpha}\ll$$
$$\ll\sum_{k=1}^\infty\frac{(1-\frac{\varepsilon}{2})^{n_k}}{n_k^\alpha}\leq \sum_{k=1}^\infty\frac{(1-\frac{\varepsilon}{2})^{Ak\varepsilon^{-\nu+\delta}}}{(Ak\varepsilon^{-\nu+\delta})^\alpha}= \frac{1}{{A^\alpha \varepsilon^{-(\nu+\delta)\alpha}}}\sum_{k=1}^\infty\frac{\bigl((1-\frac{\varepsilon}{2})^{A\varepsilon^{-\nu+\delta}}\bigr)^k}{k^\alpha}.$$

For $\alpha=1$ and $\alpha<1$ we suggest different approaches.

Let $\alpha=1$. Since
$$\sum_{k=1}^\infty\frac{z^k}{k}=-\ln(1-z),\quad |z|<1,$$
we set $z=(1-\frac{\varepsilon}{2})^{A\varepsilon^{-\nu+\delta}}$ and obtain
$$S_\varepsilon\ll-\frac{1}{{A \varepsilon^{-\nu+\delta}}}\ln\left(1-(1-\tfrac{\varepsilon}{2})^{A\varepsilon^{-\nu+\delta}}\right)=-\frac{1}{{A \varepsilon^{-\nu+\delta}}}\ln\left(1-e^{A\varepsilon^{-\nu+\delta}\ln(1-\frac{\varepsilon}{2})}\right)=$$
$$=-\frac{1}{{A \varepsilon^{-\nu+\delta}}}\ln\left(1-e^{-\frac{A}{2}\varepsilon^{-\nu+\delta+1}(1+o(1))}\right)=-\frac{1}{{A \varepsilon^{-\nu+\delta}}}\ln\left(\tfrac{A}{2}\varepsilon^{-\nu+\delta+1}(1+o(1))\right)\sim$$
$$\sim-\frac{(1-\nu+\delta)\ln\varepsilon}{{A \varepsilon^{-\nu+\delta}}}\quad\mbox{ when }\quad\varepsilon\to+0.$$
Hence the series
$$\sum_{n=1}^\infty\frac{|\cos(\pi n\theta)|^n}{n}=\sum_{s=0}^\infty S_{\frac{1}{2^{s}}}\ll \sum_{s=0}^\infty\frac{(1-\nu+\delta)s\ln 2}{A(2^{\nu-\delta})^s}$$
is convergent because $2^{\nu-\delta}=2^{\frac{1}{2\mu-2}-\delta}>1$ for sufficiently small $\delta>0$.

Now let $\alpha<1$.
We formulate a special case of an asymptotic formula from \cite[\S\,4, paragraph 3]{gelfond} as a lemma.

\begin{lem}
For $\alpha<1$ it holds
$$\sum_{k=1}^\infty\frac{z^k}{k^\alpha}\sim\Gamma(1-\alpha)\left(\ln\frac{1}{z}\right)^{\alpha-1}\quad \mbox{ when }\quad z\to 1-0.$$
\end{lem}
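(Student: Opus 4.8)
The plan is to set $z=e^{-t}$, so that $t=\ln\frac1z\to 0^+$ and the assertion becomes
$$f(t):=\sum_{k=1}^\infty\frac{e^{-kt}}{k^\alpha}\sim\Gamma(1-\alpha)\,t^{\alpha-1}\qquad(t\to 0^+).$$
Since the lemma is used in the paper only for $0<\alpha<1$ (the borderline case $\alpha=1$ having been treated separately just above), I would invoke the Eulerian integral $k^{-\alpha}=\frac1{\Gamma(\alpha)}\int_0^\infty u^{\alpha-1}e^{-ku}\,du$, valid because $\alpha>0$. Substituting it into the series, interchanging summation and integration (legitimate by Tonelli, all terms being positive), and summing the geometric series in $k$ produces the closed form
$$f(t)=\frac1{\Gamma(\alpha)}\int_0^\infty\frac{u^{\alpha-1}}{e^{t+u}-1}\,du.$$

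Next I would isolate the singularity of $\frac1{e^v-1}$ at the origin by writing $\frac1{e^v-1}=\frac1v+h(v)$, where $h$ extends continuously to $v=0$ (with $h(0)=-\tfrac12$) and satisfies $|h(v)|\le C/(1+v)$ on $[0,\infty)$, since $h(v)=-\frac1v+O(e^{-v})$ as $v\to\infty$. This splits $f(t)$ into a main term and an error term,
$$f(t)=\frac1{\Gamma(\alpha)}\int_0^\infty\frac{u^{\alpha-1}}{t+u}\,du+\frac1{\Gamma(\alpha)}\int_0^\infty u^{\alpha-1}h(t+u)\,du.$$
In the first integral the scaling $u=tv$ gives $t^{\alpha-1}\int_0^\infty\frac{v^{\alpha-1}}{1+v}\,dv=t^{\alpha-1}B(\alpha,1-\alpha)=t^{\alpha-1}\Gamma(\alpha)\Gamma(1-\alpha)$ by the Beta integral and the reflection formula; dividing by $\Gamma(\alpha)$ leaves exactly $\Gamma(1-\alpha)\,t^{\alpha-1}$. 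The error term is bounded in absolute value by $\frac{C}{\Gamma(\alpha)}\int_0^\infty\frac{u^{\alpha-1}}{1+u}\,du=C\,\Gamma(1-\alpha)$, a constant independent of $t$. Hence $f(t)=\Gamma(1-\alpha)\,t^{\alpha-1}+O(1)\sim\Gamma(1-\alpha)\,t^{\alpha-1}$, which is the claim after returning to the variable $z$.

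The step requiring the most care is the uniform control of the error integral $\int_0^\infty u^{\alpha-1}h(t+u)\,du$: both endpoints must cooperate, and this is precisely where the full hypothesis $0<\alpha<1$ is used — $\alpha>0$ makes $u^{\alpha-1}$ integrable near $0$, while $\alpha<1$, together with the decay $h(v)=O(1/v)$, secures integrability near $\infty$. Everything else is routine: the interchange of sum and integral is justified by positivity, and $\int_0^\infty v^{\alpha-1}(1+v)^{-1}\,dv=\Gamma(\alpha)\Gamma(1-\alpha)$ is classical. Conceptually the identity merely says that $t\sum_{k\ge1}g(kt)\to\int_0^\infty g$ for $g(u)=u^{-\alpha}e^{-u}$, i.e.\ it is a Riemann-sum limit with an integrable singularity at the origin; the integral-representation route is simply the cleanest way to keep the remainder uniform in $t$. (For $\alpha\le 0$, which is not needed here, $g$ is bounded and of bounded variation, and the Riemann-sum statement is elementary.)
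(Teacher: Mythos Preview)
Your argument is correct for the range $0<\alpha<1$ actually used in the paper: the Mellin representation $k^{-\alpha}=\Gamma(\alpha)^{-1}\int_0^\infty u^{\alpha-1}e^{-ku}\,du$, the Tonelli interchange, the splitting $\frac{1}{e^v-1}=\frac{1}{v}+h(v)$, the Beta-integral evaluation of the main term, and the $O(1)$ bound on the remainder all go through as you describe. The only place one might pause is your claim $|h(v)|\le C/(1+v)$, but this is indeed clear from continuity of $h$ at $0$ together with $h(v)=-1/v+O(e^{-v})$ at infinity.

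The paper, however, does not prove this lemma at all: it is simply quoted as ``a special case of an asymptotic formula from \cite[\S\,4, paragraph 3]{gelfond}'' (Gelfond, \emph{Residues and their applications}). Gelfond's treatment uses contour integration and the residue calculus to obtain the full asymptotic expansion of the polylogarithm near $z=1$, of which this lemma is the leading term. Your real-variable approach is therefore genuinely different in method --- more elementary, self-contained, and tailored to give exactly the leading asymptotic plus an $O(1)$ error, which is all the application requires. The trade-off is that the residue method yields the complete expansion (including lower-order terms involving $\zeta$-values) and handles all $\alpha<1$ uniformly, whereas your argument is specific to $0<\alpha<1$; you rightly note that $\alpha\le 0$ would need a separate (easier) treatment but is not invoked in the paper.
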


Again put $z=(1-\frac{\varepsilon}{2})^{A\varepsilon^{-\nu+\delta}}$ and for $\alpha<1$ we have
$$S_\varepsilon\ll \frac{1}{{A^\alpha \varepsilon^{(-\nu+\delta)\alpha}}}\Gamma(1-\alpha)\left(-A\varepsilon^{-\nu+\delta}\ln(1-\tfrac{\varepsilon}{2})\right)^{\alpha-1}\sim$$
$$\sim\frac{\Gamma(1-\alpha)}{2^{\alpha-1}A}\varepsilon^{(1-\nu+\delta)(\alpha-1)-(-\nu+\delta)\alpha}= \frac{\Gamma(1-\alpha)}{2^{\alpha-1}A}\varepsilon^{\alpha+\nu-\delta-1}\quad\mbox{ when }\quad\varepsilon\to+0.$$
Thus the series
$$\sum_{n=1}^\infty\frac{|\cos(\pi n\theta)|^n}{n^{\alpha}}=\sum_{s=1}^\infty S_{\frac{1}{2^{s}}}\ll \frac{\Gamma(1-\alpha)}{2^{\alpha-1}A}\sum_{s=1}^\infty\frac{1}{(2^{\alpha+\nu-\delta-1})^s}$$
converges if $2^{\alpha+\nu-\delta-1}>1$, i.e. if $\alpha>1-\nu+\delta=1-\frac{1}{2\mu-2}+\delta$ for sufficiently small $\delta>0$.
Proof of Theorem 3 is completed.

\section{Analytic proof of Theorem 4}

For the reason of periodicity it is enough to prove the convergence for almost all real $\theta$ from $[0;1]$ when $\alpha>\frac12$.
We consider the sin-series; one can easily handle the cos-series in a similar way. Since
$$\int\limits_0^1 \frac{|\sin(\pi n\theta)|^n}{n^\alpha}\,d\theta=\frac{1}{\pi n^{\alpha+1}}\int\limits_0^{\pi n}|\sin x|^n\,dx=\frac{2n}{\pi n^{\alpha+1}}\int\limits_0^{\pi/2}\sin^n x\,dx=\frac{2I_n}{\pi n^\alpha},$$
where
$$I_n=\int\limits_0^{\pi/2}\sin^n x\,dx\asymp \frac{1}{\sqrt{n}},$$
it holds
$$\int\limits_0^1 \frac{|\sin(\pi n\theta)|^n}{n^\alpha}\,d\theta\asymp\frac{1}{n^{\alpha+\frac12}},$$
hence the series with integral terms
$$\sum_{n=1}^\infty\int\limits_0^1 \frac{|\sin(\pi n\theta)|^n}{n^\alpha}\,d\theta$$
converges for $\alpha+\frac12>1$, i.e. for $\alpha>\frac12$. By the corollary of B.\,Levi's monotone convergence theorem \cite[sec. 30.1]{kolmogorov} it follows that the series
$$\sum_{n=1}^\infty\frac{|\sin(\pi n\theta)|^n}{n^\alpha}$$
converges for $\alpha>\frac12$ almost everywhere on $[0;1]$ and therefore on $\mathbb{R}$.

\section{Divergence of the series on an everywhere dense set}

In this section we reveal that every interval contains irrational numbers $\theta$ with infinite
irrationality measure for which both series diverge for all $\alpha\leq 1$. This shows
that the result analogous to Theorem \ref{sincosirr} doesn't hold for such numbers.
First, we need the following lemma on the rate of divergence of the series under
consideration for some rational $\theta$.

\begin{lem}\label{divrate}
Let $\theta=\frac{p}{q}$ with coprime numbers $p,q$, where $q$ is divisible by $4$. Then there exists such a positive integer $A_q$ that the following estimates hold uniformely for all positive integers $L$ and $\alpha\leq 1$:
$$
\left|\sum_{n=1}^{2qL}\frac{\sin^n\frac{\pi pn}{q}}{n^\alpha}\right|\geq\frac{1}{q}\ln L-A_q,
\quad
\left|\sum_{n=1}^{2qL}\frac{\cos^n\frac{\pi pn}{q}}{n^\alpha}\right|\geq\frac{1}{q}\ln L-A_q,
$$
\end{lem}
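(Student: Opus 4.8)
The plan is to recycle the two decompositions already carried out in the proofs of Theorem~\ref{cosratio} and Theorem~\ref{sinratio}, but now truncated at $n=2qL$. In each case the resulting finite sum splits into a \emph{resonant part} --- the indices $n$ for which the trigonometric factor equals $\pm1$ --- and a \emph{damped part}, where that factor has absolute value strictly less than $1$. Since $4\mid q$, the congruence bookkeeping in those proofs shows that every resonant index carries an \emph{even} exponent, so each resonant summand is just $n^{-\alpha}$; this will be bounded below by a multiple of a harmonic sum, hence by $\tfrac1q\ln L$. The damped part will be bounded above, in absolute value, by a constant depending only on $q$, uniformly in $L$ and in $\alpha$. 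Throughout I will use two elementary facts: $n^{-\alpha}\ge n^{-1}$ for $n\ge1,\ \alpha\le1$, and $n^{-\alpha}\le1$ for $n\ge1,\ \alpha>0$.

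For the sine series I would follow the proof of Theorem~\ref{sinratio}(3) with $q=4q_1$: writing $n=2lq+a$, $1\le a\le 2q$, $0\le l\le L-1$, the resonant indices are $a=a_0$ and $a=q+a_0$ (with $a_0$ even, $a_0<q$), so the resonant part equals $\sum_{l=0}^{L-1}\bigl((2lq+a_0)^{-\alpha}+(2lq+q+a_0)^{-\alpha}\bigr)$. Since $2lq+a_0<2lq+q+a_0<2q(l+1)$ and $\alpha\le1$,
$$\sum_{l=0}^{L-1}\bigl((2lq+a_0)^{-\alpha}+(2lq+q+a_0)^{-\alpha}\bigr)\ \ge\ \sum_{l=0}^{L-1}\frac{2}{2q(l+1)}\ =\ \frac1q\sum_{m=1}^{L}\frac1m\ \ge\ \frac1q\ln L .$$
For every other $a$ one has $c_a:=|\sin\tfrac{\pi ap}{q}|<1$ and $|\sin\tfrac{\pi(2lq+a)p}{q}|=c_a$; hence for $\alpha>0$ the damped part is, in absolute value, at most $\sum_a\sum_{l\ge0}c_a^{\,2lq+a}=\sum_a c_a^{\,a}/(1-c_a^{\,2q})=:B_q$, finite and independent of $L$ and $\alpha$. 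The triangle inequality then yields $\bigl|\sum_{n=1}^{2qL}\sin^n\tfrac{\pi pn}{q}\,n^{-\alpha}\bigr|\ge\tfrac1q\ln L-B_q$.

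For the cosine series ($q$ even, so $p$ odd) I would use the decomposition from the proof of Theorem~\ref{cosratio}(1): with $n=lq+a$, $1\le a\le q$, $0\le l\le 2L-1$, the only resonant index is $a=q$, contributing $\sum_{l=0}^{2L-1}(q(l+1))^{-\alpha}\ge\tfrac1q\sum_{m=1}^{2L}\tfrac1m\ge\tfrac1q\ln(2L)>\tfrac1q\ln L$, while for $1\le a\le q-1$ one has $|\cos(\tfrac{\pi ap}{q}+\pi pl)|=d_a:=|\cos\tfrac{\pi ap}{q}|<1$, so the damped part is at most $B_q':=\sum_{a=1}^{q-1}d_a^{\,a}/(1-d_a^{\,q})$, again finite and $\alpha$-free; thus $\bigl|\sum_{n=1}^{2qL}\cos^n\tfrac{\pi pn}{q}\,n^{-\alpha}\bigr|\ge\tfrac1q\ln L-B_q'$. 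Since $\sin^n\tfrac{\pi pn}{q}$ and $\cos^n\tfrac{\pi pn}{q}$ depend only on $p\bmod q$, there are finitely many residues $p$ to inspect, and I would take $A_q$ to be any positive integer exceeding all of the resulting constants $B_q$ and $B_q'$.

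The argument is essentially bookkeeping, so I do not expect a genuine obstacle; the one point demanding care is the \emph{uniformity in $\alpha$} of the bound on the damped part. It is exactly $n^{-\alpha}\le1$ (valid for $n\ge1$, $\alpha>0$) that permits majorizing the damped sums by the $\alpha$-independent geometric series $\sum_l c_a^{\,2lq+a}$ (resp.\ $\sum_l d_a^{\,lq+a}$); for $\alpha\le0$ the general term of the original series does not tend to $0$, so divergence is immediate by other means, and $0<\alpha\le1$ is the substantive range of the lemma.
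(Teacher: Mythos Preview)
Your argument is correct and follows essentially the same route as the paper: split the partial sum via the residue decompositions from Theorems~\ref{cosratio} and~\ref{sinratio}, bound the resonant terms below by $\tfrac1q\ln L$ using $n^{-\alpha}\ge n^{-1}$, and bound the damped terms above by an $L$-free constant. You are in fact a bit more explicit than the paper---you spell out the cosine case, give closed forms for $B_q,B_q'$, address uniformity in $\alpha$ (restricting to $0<\alpha\le1$ to use $n^{-\alpha}\le1$), and take the maximum over residues $p$ so that $A_q$ depends only on $q$---all points the paper glosses over.
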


\begin{proof}
In the proof of Theorem \ref{sinratio} we established the following representation of the $\sin$-series partial sum:
$$\sum_{n=1}^{N_1}\frac{\sin^n\frac{\pi pn}{q}}{n^\alpha}=
\sum_{\substack{a=1\\a\neq a_0\\a\neq q+a_0}}^{2q}\sum_{l=0}^{L} \frac{\left(\sin\frac{\pi ap}{q}\right)^{2lq+a}}{(2lq+a)^\alpha}+\sum_{l=0}^{L}\left(\frac{1}{(2lq+a_0)^\alpha}+\frac{1}{(2lq+q+a_0)^\alpha}\right).$$
Since for $a\neq a_0$, $a\neq q+a_0$ all the series
$$\sum_{l=0}^{\infty} \frac{\left(\sin\frac{\pi ap}{q}\right)^{2lq+a}}{(2lq+a)^\alpha}$$
converge absolutely for all $\alpha$, there exists such an integer $A_q$ depending only on $q$ that
$$
\left|\sum_{\substack{a=1\\a\neq a_0\\a\neq q+a_0}}^{2q}\sum_{l=0}^{L} \frac{\left(\sin\frac{\pi ap}{q}\right)^{2lq+a}}{(2lq+a)^\alpha}\right|\leq
\sum_{\substack{a=1\\a\neq a_0\\a\neq q+a_0}}^{2q}\sum_{l=0}^{\infty} \frac{\left|\sin\frac{\pi ap}{q}\right|^{2lq+a}}{(2lq+a)^\alpha}\leq A_q.
$$
Now it can be easily seen that
$$
\sum_{l=0}^\infty\left(\frac{1}{(2lq+a_0)^\alpha}+\frac{1}{(2lq+q+a_0)^\alpha}\right)\geq
\sum_{l=0}^{L}\left(\frac{1}{2lq+2q}+\frac{1}{2lq+q+q}\right)=$$
$$=\frac{1}{q}\sum_{l=0}^{L}\frac{1}{l+1}\geq \frac{1}{q}\sum_{l=1}^{L}\frac{1}{l}\geq\frac{1}{q}\ln L.
$$
Thus
$$
\left|\sum_{n=1}^{2qL}\frac{\sin^n\frac{\pi pn}{q}}{n^\alpha}\right|\geq\frac{1}{q}\ln L-A_q.
$$

The analogous inequality for the $\cos$-series can be proved in the same way using proof of Theorem \ref{cosratio}.
\end{proof}

\begin{proof}[Proof of Theorem 5]
Our goal is to show that every interval $(x_1;x_2)\subset\mathbb R$ contains such an irrational
number $\xi$ that both series
$$\sum_{n=1}^\infty\frac{\sin^n\pi\xi n}{n^{\alpha}},\quad \sum_{n=1}^\infty\frac{\cos^n\pi\xi n}{n^{\alpha}}$$
diverge for all $\alpha\leq 1$.
This construction uses the idea similar to the one implemented in \cite{hardylittlewood}.
We start with choosing an integer $u$ and positive integers $b_1<b_2<\ldots<b_s<\nu_1$
for which both numbers
$$
\xi_0=u+\sum_{i=1}^{s}\frac{1}{2^{b_i}} \quad\mbox{ and }\quad \xi_0+\frac{2}{2^{\nu_1}}
$$
fall in the interval $(x_1;x_2)$; evidently, these numbers exist.
Let $$\xi_1=\xi_0+\frac{1}{2^{\nu_1}},$$
then since $\xi_1=\frac{p}{q},$ where $p$ is odd and $q=2^{\nu_1}$ is divisible by $4$, both series
$$\sum_{n=1}^\infty\frac{\sin^n\pi\xi_1 n}{n^{\alpha}},\quad \sum_{n=1}^\infty\frac{\cos^n\pi\xi_1 n}{n^{\alpha}}$$
diverge to $+\infty$ by virtue of Theorems 1 and 2.
We recurrently define $\nu_2,\nu_3,\ldots$ so that
$$\nu_{k+1}=2^{\nu_{k}+2}(A_{2^{\nu_{k}}}+k+4)+2\nu_{k}+3,$$
where $A_{2^{\nu_{k}}}$ is provided by Lemma \ref{divrate}, and set
$$\xi_k=\xi_0+\sum_{i=1}^{k}\frac{1}{2^{\nu_i}}\quad\mbox{ and }\quad\xi=\xi_0+\sum_{i=1}^{\infty}\frac{1}{2^{\nu_i}},$$
therefore $\xi\in(x_1;x_2)$, which satisfies our needs.

Now we are ready to show that for every positive integer $k$ there exists such a number $N_k$ that
$$
\left|\sum_{n=1}^{N_k}\frac{\sin^n \pi\xi n}{n^{\alpha}}\right|> k \quad\mbox{ and }\quad \left|\sum_{n=1}^{N_k}\frac{\cos^n \pi\xi n}{n^{\alpha}}\right|> k,
$$
which means the desired divergence.
We have
$$
\left|\sum_{n=1}^{N_k}\frac{\sin^n \pi\xi n}{n^{\alpha}}\right|\geq
\left|\sum_{n=1}^{N_k}\frac{\sin^n \pi\xi_k n}{n^{\alpha}}-\sum_{n=1}^{N_k}\frac{\sin^n \pi\xi_k n-\sin^n \pi\xi n}{n^{\alpha}}\right|\geq
$$
$$
\geq\left|\sum_{n=1}^{N_k}\frac{\sin^n \pi\xi_k n}{n^{\alpha}}\right|-\sum_{n=1}^{N_k}\frac{|\sin^n \pi\xi_k n-\sin^n \pi\xi n|}{n^{\alpha}}.
$$
Applying Lemma \ref{divrate} with $q=2^{\nu_k}$, $L=2^{2q(A_q+k+4)}$, and $N=N_k=2qL$ yields an estimate
$$\left|\sum_{n=1}^{N_k}\frac{\sin^n \pi\xi_k n}{n^{\alpha}}\right|\geq\frac{1}{2q}\log_2 L-A_q=k+4$$
and the same one for the cos-series.
Next, by the mean value theorem
$$
\sum_{n=1}^{N_k}\frac{|\sin^n \pi\xi_k n-\sin^n \pi\xi n|}{n^{\alpha}}\leq\sum_{n=1}^{N_k}\frac{\pi n|\xi_k-\xi|}{n^{\alpha}}\leq
\pi |\xi - \xi_k|\sum_{n=1}^{N_k}n \leq \pi N_k^2|\xi - \xi_k|.
$$
Since by construction
$$
|\xi-\xi_k|\leq\frac{2}{2^{\nu_{k+1}}},
$$
in line with the choice of $\nu_{k+1}$ we have
$$
N_k^2|\xi - \xi_k|\leq
\left(2\cdot 2^{\nu_k}\cdot 2^{2\cdot2^{\nu_k}(A_{2^{\nu_k}}+k+4)}\right)^2\frac{2}{2^{2^{\nu_{k}+2}(A_{2^{\nu_{k}}}+k+4)+2\nu_{k}+3}}=1.
$$
Thus
$$
\left|\sum_{n=1}^{N_k}\frac{\sin^n \pi\xi n}{n^{\alpha}}\right|\geq k+4-\pi > k,
$$
which is the required result. The same lower bound for the partial sum of
the $\cos$-series can be easily obtained quite the same way.
\end{proof}

In conclusion we note that the set $E$ from Theorem \ref{divergence} consists solely of Liouville numbers.

{\it Department of Mathematics and Mechanics, Moscow State University

1 Leninskie Gory, 119991, Moscow, Russia}

{\tt ab@rector.msu.ru, goryashin@mech.math.msu.su}

\end{document}